\newtheorem{theorem}{Theorem}
\theoremstyle{plain}
\newtheorem{acknowledgement}{Acknowledgement}
\newtheorem{corollary}{Corollary}
\newtheorem{lemma}{Lemma}
\newtheorem{remark}{Remark}
\numberwithin{equation}{section}
\begin{document}
\title[The equivalence among various iterative schemes]{The equivalence
among new multistep iteration, s-iteration and some other iterative schemes}
\author{Faik G\"{U}RSOY}
\address{Department of Mathematics, Yildiz Technical University, Davutpasa
Campus, Esenler, 34220 Istanbul, Turkey}
\email{faikgursoy02@hotmail.com;fgursoy@yildiz.edu.tr}
\urladdr{http://www.yarbis.yildiz.edu.tr/fgursoy}
\author{Vatan KARAKAYA}
\curraddr{Department of Mathematical Engineering, Yildiz Technical
University, Davutpasa Campus, Esenler, 34210 Istanbul}
\email{vkkaya@yildiz.edu.tr;vkkaya@yahoo.com}
\urladdr{http://www.yarbis.yildiz.edu.tr/vkkaya}
\author{B. E. RHOADES}
\address{Department of Mathematics, Indiana University, Bloomington, IN
47405-7106, USA}
\email{rhoades@indiana.edu}
\urladdr{http://www.math.indiana.edu/people/profile.phtml?id=rhoades}
\subjclass[2000]{Primary 47H10.}
\keywords{New multistep iteration, S-iteration, Equivalence of iterations,
Contractive-like mappings.}

\begin{abstract}
In this paper, we show that Picard, Krasnoselskij, Mann, Ishikawa, new two
step, Noor, multistep, new multistep, SP and S-iterative schemes are
equivalent for contractive-like mappings.
\end{abstract}

\maketitle

\section{Introduction and Preliminaries}

In the last four decades, attention of researchers has been focused on the
introduction and the convergences of various iteration procedures for
approximate fixed points of certain classes of self- nonlinear mappings,
e.g. see \cite{RS7, New, Noor, SP, Agarwall, Glowinski, Ishikawa, Thianwan,
Mann}.

The most celebrated fixed point iterative procedures are the Picard \cite%
{Picard}, Mann \cite{Mann}, and Ishikawa \cite{Ishikawa} iterative
procedures. Numerous convergence results have been proved through these
iterative procedures for approximating fixed points of different type
nonlinear mappings, e.g. see \cite{Ishikawa, Berinde1, Berinde,
Mann,Yuguang, Zhenyu}. But in some cases, some particular iteration
procedure may fail to converge for some class of nonlinear mappings. For
instance, (i) the Picard iteration procedure \cite{Picard} does not
convergence to the fixed point of nonexpansive mappings, (for more detail
see pp.8, Example 1.8 in \cite{Berinde2}),while the Ishikawa iteration \cite%
{Ishikawa} and Mann iteration \cite{Mann} converges. (ii) By providing a
counter example, Chidume and Mutangadura \cite{Chidume} showed that the Mann
iteration \cite{Mann} fails to converge for the class of Lipschitzian
pseudocontractive mappings while the Ishikawa iteration \cite{Ishikawa}
converges.

In the light of the above facts, a conjecture was put forwad in \cite{RS5,
RS7} as follows: While the Mann iteration \cite{Mann} converges to a fixed
point of a particular class of mappings, does the Ishikawa iteration \cite%
{Ishikawa} converges too? During the past 11 years, this conjecture was
proven affirmatively by many researchers and consequently a large literature
has developed around the theme of establishing the equivalence among
convergences of some well-known iterative schemes deal with various classes
of mappings. Some authors who have made contributions to the study of
equivalence among various iterative schemes are Rhoades and \c{S}oltuz \cite%
{RS1, RS2, RS3, RS4, RS5, RS6, RS7, RS8}, Berinde \cite{Berinde1}, \c{S}%
oltuz \cite{S1,S2}, Olaleru and Akewe \cite{Olaleru}, Chang et al \cite%
{Chang} and several of the references therein.

The main objective of this paper is attepmt to verify the above conjecture
for a new multistep iteration \cite{New} and some other well-known iterative
procedures in the literature.

As a background for our exposition, we now mention some contractive mappings
and iteration schemes.

In \cite{Zamfirescu} Zamfirescu established an important generalization of
the Banach fixed point theorem using the following contractive condition:
For a mapping $T:E\rightarrow E$, there exist real numbers $a,b,c$
satisfying $0<a<1$, $0<b,c<1/2$ such that, for each pair $x,y\in X$, at
least one of the following is true:%
\begin{equation}
\left\{ 
\begin{array}{c}
\text{(z}_{\text{1}}\text{) \ \ \ \ \ \ \ \ \ \ \ \ \ \ \ \ \ \ \ \ \ \ \ \ }%
\left\Vert Tx-Ty\right\Vert \leq a\left\Vert x-y\right\Vert \text{,} \\ 
\text{(z}_{\text{2}}\text{) \ \ \ \ }\left\Vert Tx-Ty\right\Vert \leq
b\left( \left\Vert x-Tx\right\Vert +\left\Vert y-Ty\right\Vert \right) \text{%
,} \\ 
\text{(z}_{\text{3}}\text{) \ \ \ \ }\left\Vert Tx-Ty\right\Vert \leq
c\left( \left\Vert x-Ty\right\Vert +\left\Vert y-Tx\right\Vert \right) \text{%
.}%
\end{array}%
\right.  \label{eqn1}
\end{equation}%
A mapping $T$ satisfying the contractive conditions (z$_{\text{1}}$), (z$_{%
\text{2}}$) and (z$_{\text{3}}$) in (1.1) is called a Zamfirescu mapping.

As shown in \cite{Berinde}, the contractive condition (1.1) leads to%
\begin{equation}
\left\{ 
\begin{array}{c}
\text{(b}_{\text{1}}\text{) \ \ \ \ }\left\Vert Tx-Ty\right\Vert \leq \delta
\left\Vert x-y\right\Vert +2\delta \left\Vert x-Tx\right\Vert \text{ if one
use (z}_{\text{2}}\text{),} \\ 
\text{and \ \ \ \ \ \ \ \ \ \ \ \ \ \ \ \ \ \ \ \ \ \ \ \ \ \ \ \ \ \ \ \ \
\ \ \ \ \ \ \ \ \ \ \ \ \ \ \ \ \ \ \ \ \ \ \ \ \ \ \ \ \ \ \ \ \ \ \ \ \ \
\ \ \ \ \ \ \ \ \ \ \ \ \ \ } \\ 
\text{(b}_{\text{2}}\text{) \ \ \ \ }\left\Vert Tx-Ty\right\Vert \leq \delta
\left\Vert x-y\right\Vert +2\delta \left\Vert x-Ty\right\Vert \text{ if one
use (z}_{\text{3}}\text{),}%
\end{array}%
\right.  \label{eqn2}
\end{equation}%
for all $x,y\in E$ where $\delta :=\max \left\{ a,\frac{b}{1-b},\frac{c}{1-c}%
\right\} $, $\delta \in \left[ 0,1\right) $, and it was shown that this
class of mappings is wider than the class of Zamfirescu mappings. Any
mapping satisfying condition (b$_{\text{1}}$) or (b$_{\text{2}}$) is called
a quasi-contractive mapping.

Extending the above definition, Osilike and Udomene \cite{Osilike}
considered mappings $T$ for which there exist real numbers $L\geq 0$ and $%
\delta \in \left[ 0,1\right) $ such that for all $x$, $y\in E$,%
\begin{equation}
\left\Vert Tx-Ty\right\Vert \leq \delta \left\Vert x-y\right\Vert
+L\left\Vert x-Tx\right\Vert .  \label{eqn3}
\end{equation}%
Imoru and Olantiwo \cite{Imoru} gave a more general definition: The mapping $%
T$\ is called a contractive-like mapping if there exists a constant $\delta
\in \left[ 0,1\right) $\ and a strictly increasing and continuous function $%
\varphi :\left[ 0,\infty \right) \rightarrow \left[ 0,\infty \right) $\ with 
$\varphi \left( 0\right) =0$,\ such that, for each $x,y\in E$,%
\begin{equation}
\left\Vert Tx-Ty\right\Vert \leq \delta \left\Vert x-y\right\Vert +\varphi
\left( \left\Vert x-Tx\right\Vert \right) .  \label{eqn4}
\end{equation}

\begin{remark}
\cite{New} A map satisfying (1.4) need not have a fixed point. However,
using (1.4), it is obvious that if T has a fixed point, then it is unique.
\end{remark}

Throughout the rest of this paper $%
\mathbb{N}
$ denotes the set of all nonnegative integers. Let $X$ be a Banach space and 
$E\subset X$\ be a nonempty closed, convex subset of $X$, and $T$ be a self
map on $E$. Define $F_{T}:=\left\{ p\in X:~p=Tp\right\} $ to be the set of
fixed points of $T$. Let $\left\{ \alpha _{n}\right\} _{n=0}^{\infty }$, $%
\left\{ \beta _{n}\right\} _{n=0}^{\infty }$,$\left\{ \gamma _{n}\right\}
_{n=0}^{\infty }$ and $\left\{ \beta _{n}^{i}\right\} _{n=0}^{\infty }$, $i=%
\overline{1,k-2}$, $k\geq 2$ be real sequences in $\left[ 0,1\right) $
satisfying certain conditions.

Rhoades and \c{S}oltuz \cite{RS7}, introduced a multistep iterative
algorithm by%
\begin{equation}
\left\{ 
\begin{array}{c}
x_{0}\in E\text{, \ \ \ \ \ \ \ \ \ \ \ \ \ \ \ \ \ \ \ \ \ \ \ \ \ \ \ \ \
\ \ \ \ \ \ \ \ \ \ \ \ \ \ \ \ } \\ 
x_{n+1}=\left( 1-\alpha _{n}\right) x_{n}+\alpha _{n}Ty_{n}^{1}\text{, \ \ \
\ \ \ \ \ \ \ \ \ \ \ \ \ \ } \\ 
y_{n}^{i}=\left( 1-\beta _{n}^{i}\right) x_{n}+\beta _{n}^{i}Ty_{n}^{i+1}%
\text{, \ \ \ \ \ \ \ \ \ \ \ } \\ 
y_{n}^{k-1}=\left( 1-\beta _{n}^{k-1}\right) x_{n}+\beta
_{n}^{k-1}Tx_{n},~n\in 
\mathbb{N}
\text{.}%
\end{array}%
\right.
\end{equation}%
The following multistep iteration was employed in \cite{New}%
\begin{equation}
\left\{ 
\begin{array}{c}
x_{0}\in E\text{, \ \ \ \ \ \ \ \ \ \ \ \ \ \ \ \ \ \ \ \ \ \ \ \ \ \ \ \ \
\ \ \ \ \ \ \ \ \ \ \ \ \ \ \ \ } \\ 
x_{n+1}=\left( 1-\alpha _{n}\right) y_{n}^{1}+\alpha _{n}Ty_{n}^{1}\text{, \
\ \ \ \ \ \ \ \ \ \ \ \ \ \ \ \ } \\ 
y_{n}^{i}=\left( 1-\beta _{n}^{i}\right) y_{n}^{i+1}+\beta
_{n}^{i}Ty_{n}^{i+1}\text{, \ \ \ \ \ \ \ \ } \\ 
y_{n}^{k-1}=\left( 1-\beta _{n}^{k-1}\right) x_{n}+\beta
_{n}^{k-1}Tx_{n},~n\in 
\mathbb{N}
\text{.}%
\end{array}%
\right.
\end{equation}%
By taking $k=3$ and $k=2$ in (1.5) we obtain the well-known Noor \cite{Noor}
and Ishikawa \cite{Ishikawa} iterative schemes, respectively. SP iteration 
\cite{SP} and a new two-step iteration \cite{Thianwan} processes are
obtained by taking $k=3$ and $k=2$ in (1.6), respectively. Both in (1.5) and
in (1.6), if we take $k=2$ with $\beta _{n}^{1}=0$ and $k=2$ with $\beta
_{n}^{1}\equiv 0$, $\alpha _{n}\equiv \lambda $ (const.), then we get the
iterative procedures introduced in \cite{Mann} and \cite{Krasnoselskij},
which are commonly known as the Mann and Krasnoselskij iterations,
respectively. The Krasnoselskij iteration reduces to the Picard iteration 
\cite{Picard} for $\lambda =1$.

A sequence $\left\{ x_{n}\right\} _{n=0}^{\infty }$ defined by%
\begin{equation}
\left\{ 
\begin{array}{c}
x_{0}\in E\text{, \ \ \ \ \ \ \ \ \ \ \ \ \ \ \ \ \ \ \ \ \ \ \ \ \ \ \ \ \
\ \ \ \ \ \ \ \ \ \ \ \ } \\ 
x_{n+1}=\left( 1-\alpha _{n}\right) Tx_{n}+\alpha _{n}Ty_{n}\text{, \ \ \ \
\ \ \ \ \ \ \ \ } \\ 
y_{n}=\left( 1-\beta _{n}\right) x_{n}+\beta _{n}Tx_{n}\text{,}~n\in 
\mathbb{N}
\text{ \ \ }%
\end{array}%
\right.  \label{eqn11}
\end{equation}%
is known as the S-iteration process \cite{Agarwal, Agarwall}.

The following lemma will be useful to prove the main results of this work
and is important by itself.

\begin{lemma}
\cite{Weng} Let $\left\{ a_{n}\right\} _{n=0}^{\infty }$ be a nonnegative
sequence which satisfies the following inequality%
\begin{equation}
a_{n+1}\leq \left( 1-\mu _{n}\right) a_{n}+\rho _{n}\text{,}  \label{eqn18}
\end{equation}%
where $\mu _{n}\in \left( 0,1\right) ,$ for all $n\geq n_{0}$, $%
\sum\limits_{n=0}^{\infty }\mu _{n}=\infty $, and $\rho _{n}=o\left( \mu
_{n}\right) $. Then $\lim_{n\rightarrow \infty }a_{n}=0$.
\end{lemma}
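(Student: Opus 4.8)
The plan is to prove that $\limsup_{n\to\infty}a_{n}\le\varepsilon$ for every $\varepsilon>0$; since $a_{n}\ge 0$, this immediately forces $\lim_{n\to\infty}a_{n}=0$. So fix $\varepsilon>0$. Because $\rho_{n}=o(\mu_{n})$, there is an index $n_{1}\ge n_{0}$ with $\rho_{n}\le\varepsilon\mu_{n}$ for all $n\ge n_{1}$. Substituting this into \eqref{eqn18} and subtracting $\varepsilon$ from both sides gives, for every $n\ge n_{1}$,
\[
a_{n+1}-\varepsilon\le(1-\mu_{n})(a_{n}-\varepsilon).
\]

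To cope with the fact that $a_{n}-\varepsilon$ may be negative, I would pass to the truncated sequence $b_{n}:=\max\{a_{n}-\varepsilon,\,0\}\ge 0$. If $a_{n}\ge\varepsilon$, the displayed inequality is exactly $b_{n+1}\le(1-\mu_{n})b_{n}$; if $a_{n}<\varepsilon$, the right-hand side of the display is $\le 0$, hence $b_{n+1}=0\le(1-\mu_{n})b_{n}$. Either way $b_{n+1}\le(1-\mu_{n})b_{n}$ for all $n\ge n_{1}$, so by an immediate induction $0\le b_{n}\le b_{n_{1}}\prod_{k=n_{1}}^{n-1}(1-\mu_{k})$ for $n>n_{1}$. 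Using $1-t\le e^{-t}$ together with $\mu_{k}\in(0,1)$,
\[
\prod_{k=n_{1}}^{n-1}(1-\mu_{k})\le\exp\left(-\sum_{k=n_{1}}^{n-1}\mu_{k}\right)\longrightarrow 0\qquad(n\to\infty),
\]
since $\sum_{n}\mu_{n}=\infty$. Therefore $b_{n}\to 0$, i.e.\ $\limsup_{n\to\infty}(a_{n}-\varepsilon)\le 0$, so $\limsup_{n\to\infty}a_{n}\le\varepsilon$; letting $\varepsilon\downarrow 0$ completes the argument.

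The only genuinely delicate point is the truncation $b_{n}=\max\{a_{n}-\varepsilon,0\}$, which is what keeps the recursion contractive even at steps where $a_{n}$ dips below $\varepsilon$; the rest is the elementary product estimate $\prod(1-\mu_{k})\to 0$. An equivalent route, if one prefers to avoid the truncation, is to observe first that $\{a_{n}\}$ is bounded for $n\ge n_{1}$ (if $a_{n}\ge\varepsilon$ then $a_{n+1}\le a_{n}$, while if $a_{n}<\varepsilon$ then $a_{n+1}<\varepsilon$, so $a_{n}\le\max\{a_{n_{1}},\varepsilon\}$), then to unwind \eqref{eqn18} directly and bound the accumulated error by $\varepsilon$ via the telescoping identity $\mu_{k}\prod_{j=k+1}^{n}(1-\mu_{j})=\prod_{j=k+1}^{n}(1-\mu_{j})-\prod_{j=k}^{n}(1-\mu_{j})$, whose sum over $k$ is at most $1$. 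I expect the first approach to be the shortest to write out in full.
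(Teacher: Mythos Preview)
Your argument is correct. The paper does not supply its own proof of this lemma; it is quoted from Weng \cite{Weng} and used as a black box in the proofs of Theorems~1 and~2. What you have written is the standard proof: once $\rho_n\le\varepsilon\mu_n$, the shifted recursion $a_{n+1}-\varepsilon\le(1-\mu_n)(a_n-\varepsilon)$ holds, and the truncation $b_n=\max\{a_n-\varepsilon,0\}$ cleanly handles the sign issue so that the product estimate $\prod(1-\mu_k)\le\exp(-\sum\mu_k)\to 0$ finishes the job. The alternative telescoping route you sketch at the end is also valid and is in fact closer to how Weng's original argument is often presented; either version would serve here.
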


\section{Main Results}

\begin{theorem}
Let $T:E\rightarrow E$ be a mapping satisfying condition $\left( 1.4\right) $
with $F_{T}\neq \emptyset $. If $x_{0}=u_{0}\in E$ and $\alpha _{n}\geq A>0$,%
$\forall n\in 
\mathbb{N}
$, then the following are equivalent:

\begin{enumerate}
\item The Mann iteration \cite{Mann} converges to $p\in F_{T}$,

\item The new multistep iteration $\left( 1.6\right) $ converges to $p\in
F_{T}$.
\end{enumerate}
\end{theorem}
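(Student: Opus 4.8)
The plan is to prove the two implications $(1)\Rightarrow(2)$ and $(2)\Rightarrow(1)$ separately, in each case controlling the distance between the two iterative sequences by a recursive inequality of the form treated in Lemma 2 (the Weng lemma). Throughout I will write the Mann iterate as $u_{n+1}=(1-\alpha_n)u_n+\alpha_n Tu_n$ and the new multistep iterate $\{x_n\}$ as in $(1.6)$, with the same $\{\alpha_n\}$ and with $u_0=x_0$. The key algebraic tool is the contractive-like estimate $(1.4)$: for any $p\in F_T$ and any $z\in E$ we have $\|Tz-p\|=\|Tz-Tp\|\le\delta\|z-p\|+\varphi(\|p-Tp\|)=\delta\|z-p\|$, so $T$ is in particular a quasi-nonexpansive-type map toward $p$; and for comparing two arbitrary points $z,w$ we use $\|Tz-Tw\|\le\delta\|z-w\|+\varphi(\|z-Tz\|)$.

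First I would bound the ``inner'' discrepancies. Define $d_n:=\|x_n-u_n\|$. Starting from the innermost relation $y_n^{k-1}=(1-\beta_n^{k-1})x_n+\beta_n^{k-1}Tx_n$ and working outward through $y_n^{i}=(1-\beta_n^i)y_n^{i+1}+\beta_n^i Ty_n^{i+1}$, one shows by a finite (length $k$) induction that $\|y_n^{i}-x_n\|$ is bounded by a constant multiple of $\|x_n-Tx_n\|$ plus terms that telescope; more usefully, $\|x_{n+1}-x_n\|\le \big(\alpha_n+\text{(something})\big)\|$(local displacements)$\|$, and in fact each $\|y_n^i-x_n\|\to 0$ is controlled through $\|x_n-Tx_n\|\to 0$ once $x_n\to p$. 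The cleaner route for the equivalence is: assuming $(2)$, i.e. $x_n\to p$, deduce $\|x_n-Tx_n\|\to 0$ (triangle inequality plus $\|Tx_n-p\|\le\delta\|x_n-p\|$), hence each $\|y_n^i-x_n\|\to 0$, hence $\|x_{n+1}-Ty_n^1\|=(1-\alpha_n)\|y_n^1-Ty_n^1\|\to 0$; then compare with Mann. Assuming $(1)$, i.e. $u_n\to p$, we likewise get $\|u_n-Tu_n\|\to 0$.

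For the implication $(2)\Rightarrow(1)$ I would estimate $d_{n+1}=\|x_{n+1}-u_{n+1}\|$ directly:
\begin{equation}
d_{n+1}\le(1-\alpha_n)\|y_n^1-u_n\|+\alpha_n\|Ty_n^1-Tu_n\|\le(1-\alpha_n)\|y_n^1-u_n\|+\alpha_n\delta\|y_n^1-u_n\|+\alpha_n\varphi(\|y_n^1-Ty_n^1\|).
\end{equation}
Now $\|y_n^1-u_n\|\le\|y_n^1-x_n\|+d_n$, and using $\alpha_n\ge A>0$ one arranges the coefficient of $d_n$ to be $1-\mu_n$ with $\mu_n=\alpha_n(1-\delta)\ge A(1-\delta)\in(0,1)$ (so $\sum\mu_n=\infty$ automatically), while the remaining terms — which involve $\|y_n^1-x_n\|$, $\|y_n^1-Ty_n^1\|$ and $\varphi$ evaluated there — all tend to $0$ under hypothesis $(2)$ because $x_n\to p$ forces all inner displacements and hence these error terms to $0$; continuity of $\varphi$ with $\varphi(0)=0$ handles the $\varphi$ term. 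Thus $\rho_n=o(\mu_n)$ (in fact $\rho_n\to 0$ while $\mu_n$ is bounded below), and Lemma 2 gives $d_n\to 0$, whence $u_n\to p$. The reverse implication $(1)\Rightarrow(2)$ is the mirror image: write $d_{n+1}\le(1-\alpha_n)\|y_n^1-u_n\|+\alpha_n\delta\|y_n^1-u_n\|+\alpha_n\varphi(\|u_n-Tu_n\|)$, bound $\|y_n^1-u_n\|$ in terms of $d_n$ and of $\|y_n^i-x_n\|$, and observe that under $(1)$ we have $\|u_n-Tu_n\|\to0$, while the inner displacements of the multistep scheme are themselves controlled by $d_n$ and by $\|u_n-Tu_n\|$ through the same recursive outward expansion; collecting terms yields again the Lemma 2 form with $\mu_n\ge A(1-\delta)$.

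The main obstacle I anticipate is the bookkeeping for the $k$ nested levels $y_n^{k-1},\dots,y_n^1$: one must show that the cumulative error from the inner steps, $\sum_{i}c_i\|y_n^i-x_n\|$ (for appropriate constants $c_i$ depending on $\delta$ and the $\beta_n^i$, uniformly bounded since $\beta_n^i\in[0,1)$), is genuinely $o(1)$ and does not secretly reintroduce a term comparable to $d_n$ with the wrong sign. The standard device is to prove first, by induction on $i$ going from $k-1$ down to $1$, an inequality $\|y_n^i-u_n\|\le \big(\text{coeff}\big)d_n+\big(\text{coeff}\big)\|x_n-Tx_n\|$ (in the $(2)\Rightarrow(1)$ direction, with the roles of $x$ and $u$ swapped in the other direction), then feed the innermost $\|x_n-Tx_n\|$ (resp. $\|u_n-Tu_n\|$) back in — it is $o(1)$ by the standing convergence hypothesis — and only then apply Lemma 2. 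Once that telescoping estimate is set up cleanly, both directions close in essentially the same way, and the uniform lower bound $\alpha_n\ge A$ is exactly what prevents $\mu_n$ from degenerating.
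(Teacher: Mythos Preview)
Your proposal is correct and follows essentially the same route as the paper: compare the two sequences term by term, reduce to a recursion $d_{n+1}\le(1-\mu_n)d_n+\rho_n$ with $\mu_n\ge A(1-\delta)$, show the error $\rho_n\to 0$ using the convergence hypothesis and continuity of $\varphi$, and invoke the Weng lemma.

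One clarification is worth making. In the direction $(1)\Rightarrow(2)$ you first suggest the triangle-inequality detour $\|y_n^1-u_n\|\le\|y_n^1-x_n\|+d_n$ and then controlling $\|y_n^i-x_n\|$; as you yourself flag, this reintroduces a $d_n$-term through $\|x_n-Tx_n\|\le(1+\delta)d_n+\|u_n-Tu_n\|+\varphi(\|u_n-Tu_n\|)$ and can push the coefficient of $d_n$ above $1-\mu_n$. The ``standard device'' you describe at the end --- bounding $\|y_n^i-u_n\|$ directly by writing $u_n-Ty_n^{i+1}=(u_n-Tu_n)+(Tu_n-Ty_n^{i+1})$ and applying $(1.4)$ with $x=u_n$ at every level, so that each step picks up only $\|u_n-Tu_n\|+\varphi(\|u_n-Tu_n\|)$ and a factor $[1-\beta_n^i(1-\delta)]\le 1$ --- is exactly what the paper does, and is the way to close this direction without the coefficient problem. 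In the direction $(2)\Rightarrow(1)$ the paper uses the simpler estimate $\|y_n^i-u_n\|\le\|y_n^{i+1}-u_n\|+\beta_n^i\|y_n^{i+1}-Ty_n^{i+1}\|$ (no contractive condition needed at the inner levels), which matches your sketch.
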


\begin{proof}
We first prove the implication $\left( 1\right) \Rightarrow \left( 2\right) $%
: Suppose that the Mann iteration \cite{Mann} converges to $p$. Using the
Mann iteration \cite{Mann}, (1.6), and (1.4) we have the following estimates:%
\begin{eqnarray}
\left\Vert u_{n+1}-x_{n+1}\right\Vert &=&\left\Vert \left( 1-\alpha
_{n}\right) \left( u_{n}-y_{n}^{1}\right) +\alpha _{n}\left(
Tu_{n}-Ty_{n}^{1}\right) \right\Vert  \notag \\
&\leq &\left( 1-\alpha _{n}\right) \left\Vert u_{n}-y_{n}^{1}\right\Vert
+\alpha _{n}\left\Vert Tu_{n}-Ty_{n}^{1}\right\Vert  \notag \\
&\leq &\left( 1-\alpha _{n}\right) \left\Vert u_{n}-y_{n}^{1}\right\Vert
+\alpha _{n}\left\{ \delta \left\Vert u_{n}-y_{n}^{1}\right\Vert +\varphi
\left( \left\Vert u_{n}-Tu_{n}\right\Vert \right) \right\}  \notag \\
&=&\left[ 1-\alpha _{n}\left( 1-\delta \right) \right] \left\Vert
u_{n}-y_{n}^{1}\right\Vert +\alpha _{n}\varphi \left( \left\Vert
u_{n}-Tu_{n}\right\Vert \right) \text{,}  \label{eqn19}
\end{eqnarray}%
\begin{eqnarray}
\left\Vert u_{n}-y_{n}^{1}\right\Vert &=&\left\Vert u_{n}-\left( 1-\beta
_{n}^{1}\right) y_{n}^{2}-\beta _{n}^{1}Ty_{n}^{2}\right\Vert  \notag \\
&=&\left\Vert u_{n}-\beta _{n}^{1}u_{n}+\beta _{n}^{1}u_{n}-\left( 1-\beta
_{n}^{1}\right) y_{n}^{2}-\beta _{n}^{1}Ty_{n}^{2}\right\Vert  \notag \\
&=&\left\Vert \left( 1-\beta _{n}^{1}\right) \left( u_{n}-y_{n}^{2}\right)
+\beta _{n}^{1}\left( u_{n}-Ty_{n}^{2}\right) \right\Vert  \notag \\
&\leq &\left( 1-\beta _{n}^{1}\right) \left\Vert u_{n}-y_{n}^{2}\right\Vert
+\beta _{n}^{1}\left\Vert u_{n}-Ty_{n}^{2}\right\Vert  \notag \\
&=&\left( 1-\beta _{n}^{1}\right) \left\Vert u_{n}-y_{n}^{2}\right\Vert
+\beta _{n}^{1}\left\Vert u_{n}-Tu_{n}+Tu_{n}-Ty_{n}^{2}\right\Vert  \notag
\\
&\leq &\left( 1-\beta _{n}^{1}\right) \left\Vert u_{n}-y_{n}^{2}\right\Vert
+\beta _{n}^{1}\left\Vert Tu_{n}-Ty_{n}^{2}\right\Vert +\beta
_{n}^{1}\left\Vert u_{n}-Tu_{n}\right\Vert  \notag \\
&\leq &\left( 1-\beta _{n}^{1}\right) \left\Vert u_{n}-y_{n}^{2}\right\Vert
+\beta _{n}^{1}\delta \left\Vert u_{n}-y_{n}^{2}\right\Vert +\beta
_{n}^{1}\varphi \left( \left\Vert u_{n}-Tu_{n}\right\Vert \right)  \notag \\
&&+\beta _{n}^{1}\left\Vert u_{n}-Tu_{n}\right\Vert  \notag \\
&=&\left[ 1-\beta _{n}^{1}\left( 1-\delta \right) \right] \left\Vert
u_{n}-y_{n}^{2}\right\Vert +\beta _{n}^{1}\left\{ \left\Vert
u_{n}-Tu_{n}\right\Vert +\varphi \left( \left\Vert u_{n}-Tu_{n}\right\Vert
\right) \right\} \text{,}  \label{eqn20}
\end{eqnarray}%
\begin{eqnarray}
\left\Vert u_{n}-y_{n}^{2}\right\Vert &=&\left\Vert \left( 1-\beta
_{n}^{2}\right) \left( u_{n}-y_{n}^{3}\right) +\beta _{n}^{2}\left(
u_{n}-Ty_{n}^{3}\right) \right\Vert  \notag \\
&\leq &\left( 1-\beta _{n}^{2}\right) \left\Vert u_{n}-y_{n}^{3}\right\Vert
+\beta _{n}^{2}\left\Vert u_{n}-Ty_{n}^{3}\right\Vert  \notag \\
&\leq &\left( 1-\beta _{n}^{2}\right) \left\Vert u_{n}-y_{n}^{3}\right\Vert
+\beta _{n}^{2}\left\Vert Tu_{n}-Ty_{n}^{3}\right\Vert +\beta
_{n}^{2}\left\Vert u_{n}-Tu_{n}\right\Vert  \notag \\
&\leq &\left( 1-\beta _{n}^{2}\right) \left\Vert u_{n}-y_{n}^{3}\right\Vert
+\beta _{n}^{2}\delta \left\Vert u_{n}-y_{n}^{3}\right\Vert +\beta
_{n}^{2}\varphi \left( \left\Vert u_{n}-Tu_{n}\right\Vert \right)  \notag \\
&&+\beta _{n}^{2}\left\Vert u_{n}-Tu_{n}\right\Vert  \notag \\
&=&\left[ 1-\beta _{n}^{2}\left( 1-\delta \right) \right] \left\Vert
u_{n}-y_{n}^{3}\right\Vert +\beta _{n}^{2}\left\{ \left\Vert
u_{n}-Tu_{n}\right\Vert +\varphi \left( \left\Vert u_{n}-Tu_{n}\right\Vert
\right) \right\} \text{,}  \label{eqn21}
\end{eqnarray}%
\begin{eqnarray}
\left\Vert u_{n}-y_{n}^{3}\right\Vert &=&\left\Vert \left( 1-\beta
_{n}^{3}\right) \left( u_{n}-y_{n}^{4}\right) +\beta _{n}^{3}\left(
u_{n}-Ty_{n}^{4}\right) \right\Vert  \notag \\
&\leq &\left( 1-\beta _{n}^{3}\right) \left\Vert u_{n}-y_{n}^{4}\right\Vert
+\beta _{n}^{3}\left\Vert u_{n}-Ty_{n}^{4}\right\Vert  \notag \\
&\leq &\left( 1-\beta _{n}^{3}\right) \left\Vert u_{n}-y_{n}^{4}\right\Vert
+\beta _{n}^{3}\left\Vert Tu_{n}-Ty_{n}^{4}\right\Vert +\beta
_{n}^{3}\left\Vert u_{n}-Tu_{n}\right\Vert  \notag \\
&\leq &\left( 1-\beta _{n}^{3}\right) \left\Vert u_{n}-y_{n}^{4}\right\Vert
+\beta _{n}^{3}\delta \left\Vert u_{n}-y_{n}^{4}\right\Vert +\beta
_{n}^{3}\varphi \left( \left\Vert u_{n}-Tu_{n}\right\Vert \right)  \notag \\
&&+\beta _{n}^{3}\left\Vert u_{n}-Tu_{n}\right\Vert  \notag \\
&=&\left[ 1-\beta _{n}^{3}\left( 1-\delta \right) \right] \left\Vert
u_{n}-y_{n}^{4}\right\Vert +\beta _{n}^{3}\left\{ \left\Vert
u_{n}-Tu_{n}\right\Vert +\varphi \left( \left\Vert u_{n}-Tu_{n}\right\Vert
\right) \right\} \text{.}  \label{eqn22}
\end{eqnarray}%
By combinig (2.1), (2.2), (2.3), and (2.4) we obtain%
\begin{eqnarray}
\left\Vert u_{n+1}-x_{n+1}\right\Vert &\leq &\left[ 1-\alpha _{n}\left(
1-\delta \right) \right] \left[ 1-\beta _{n}^{1}\left( 1-\delta \right) %
\right] \left[ 1-\beta _{n}^{2}\left( 1-\delta \right) \right]  \notag \\
&&\left[ 1-\beta _{n}^{3}\left( 1-\delta \right) \right] \left\Vert
u_{n}-y_{n}^{4}\right\Vert  \notag \\
&&+\left[ 1-\alpha _{n}\left( 1-\delta \right) \right] \left\{ \left[
1-\beta _{n}^{1}\left( 1-\delta \right) \right] \left[ 1-\beta
_{n}^{2}\left( 1-\delta \right) \right] \beta _{n}^{3}\right.  \notag \\
&&\left. +\left[ 1-\beta _{n}^{1}\left( 1-\delta \right) \right] \beta
_{n}^{2}+\beta _{n}^{1}\right\} \left\{ \left\Vert u_{n}-Tu_{n}\right\Vert
+\varphi \left( \left\Vert u_{n}-Tu_{n}\right\Vert \right) \right\}  \notag
\\
&&+\alpha _{n}\varphi \left( \left\Vert u_{n}-Tu_{n}\right\Vert \right)
\label{eqn23}
\end{eqnarray}%
Continuing the above process we have%
\begin{eqnarray}
\left\Vert u_{n+1}-x_{n+1}\right\Vert &\leq &\left[ 1-\alpha _{n}\left(
1-\delta \right) \right] \left[ 1-\beta _{n}^{1}\left( 1-\delta \right) %
\right] \cdots \left[ 1-\beta _{n}^{k-2}\left( 1-\delta \right) \right]
\left\Vert u_{n}-y_{n}^{k-1}\right\Vert  \notag \\
&&+\left[ 1-\alpha _{n}\left( 1-\delta \right) \right] \left\{ \left[
1-\beta _{n}^{1}\left( 1-\delta \right) \right] \cdots \left[ 1-\beta
_{n}^{k-3}\left( 1-\delta \right) \right] \beta _{n}^{k-2}\right.  \notag \\
&&\left. +\cdots +\left[ 1-\beta _{n}^{1}\left( 1-\delta \right) \right]
\beta _{n}^{2}+\beta _{n}^{1}\right\} \left\{ \left\Vert
u_{n}-Tu_{n}\right\Vert +\varphi \left( \left\Vert u_{n}-Tu_{n}\right\Vert
\right) \right\}  \notag \\
&&+\alpha _{n}\varphi \left( \left\Vert u_{n}-Tu_{n}\right\Vert \right) 
\text{.}  \label{eqn24}
\end{eqnarray}%
Again using (1.6), and (1.4) we get%
\begin{eqnarray}
\left\Vert u_{n}-y_{n}^{k-1}\right\Vert &=&\left\Vert \left( 1-\beta
_{n}^{k-1}\right) \left( u_{n}-x_{n}\right) +\beta _{n}^{k-1}\left(
u_{n}-Tx_{n}\right) \right\Vert  \notag \\
&\leq &\left( 1-\beta _{n}^{k-1}\right) \left\Vert u_{n}-x_{n}\right\Vert
+\beta _{n}^{k-1}\left\Vert u_{n}-Tx_{n}\right\Vert  \notag \\
&\leq &\left( 1-\beta _{n}^{k-1}\right) \left\Vert u_{n}-x_{n}\right\Vert
+\beta _{n}^{k-1}\left\Vert Tu_{n}-Tx_{n}\right\Vert +\beta
_{n}^{k-1}\left\Vert u_{n}-Tu_{n}\right\Vert  \notag \\
&\leq &\left[ 1-\beta _{n}^{k-1}\left( 1-\delta \right) \right] \left\Vert
u_{n}-x_{n}\right\Vert +\beta _{n}^{k-1}\left\{ \left\Vert
u_{n}-Tu_{n}\right\Vert +\varphi \left( \left\Vert u_{n}-Tu_{n}\right\Vert
\right) \right\} \text{.}  \label{eqn25}
\end{eqnarray}%
Since $\delta \in \left[ 0,1\right) $ and $\left\{ \alpha _{n}\right\}
_{n=0}^{\infty }$,$\left\{ \beta _{n}^{i}\right\} _{n=0}^{\infty }\subset %
\left[ 0,1\right) $ for $i=\overline{1,k-1}$, we have%
\begin{equation}
\left[ 1-\alpha _{n}\left( 1-\delta \right) \right] \left[ 1-\beta
_{n}^{1}\left( 1-\delta \right) \right] \cdots \left[ 1-\beta
_{n}^{k-1}\left( 1-\delta \right) \right] \leq \left[ 1-\alpha _{n}\left(
1-\delta \right) \right] \text{.}  \label{eqn26}
\end{equation}%
Using inequality (2.8) and the assumption $\alpha _{n}\geq A>0$,$\forall
n\in 
\mathbb{N}
$ in the resultant inequality obtained by substituting (2.7) in (2.6) we get%
\begin{eqnarray}
\left\Vert u_{n+1}-x_{n+1}\right\Vert &\leq &\left[ 1-A\left( 1-\delta
\right) \right] \left\Vert u_{n}-x_{n}\right\Vert  \notag \\
&&+\left[ 1-A\left( 1-\delta \right) \right] \left\{ \left[ 1-\beta
_{n}^{1}\left( 1-\delta \right) \right] \cdots \left[ 1-\beta
_{n}^{k-2}\left( 1-\delta \right) \right] \beta _{n}^{k-1}\right.  \notag \\
&&\left. +\cdots +\left[ 1-\beta _{n}^{1}\left( 1-\delta \right) \right]
\beta _{n}^{2}+\beta _{n}^{1}\right\} \left\{ \left\Vert
u_{n}-Tu_{n}\right\Vert +\varphi \left( \left\Vert u_{n}-Tu_{n}\right\Vert
\right) \right\}  \notag \\
&&+\alpha _{n}\varphi \left( \left\Vert u_{n}-Tu_{n}\right\Vert \right) 
\text{.}  \label{eqn27}
\end{eqnarray}%
Define%
\begin{eqnarray*}
a_{n} &:&=\left\Vert u_{n}-x_{n}\right\Vert \text{,} \\
\mu _{n} &:&=A\left( 1-\delta \right) \in \left( 0,1\right) \text{,} \\
\rho _{n} &:&=\left[ 1-A\left( 1-\delta \right) \right] \left\{ \left[
1-\beta _{n}^{1}\left( 1-\delta \right) \right] \cdots \left[ 1-\beta
_{n}^{k-2}\left( 1-\delta \right) \right] \beta _{n}^{k-1}\right. \\
&&\left. +\cdots +\left[ 1-\beta _{n}^{1}\left( 1-\delta \right) \right]
\beta _{n}^{2}+\beta _{n}^{1}\right\} \left\{ \left\Vert
u_{n}-Tu_{n}\right\Vert +\varphi \left( \left\Vert u_{n}-Tu_{n}\right\Vert
\right) \right\} \\
&&+\alpha _{n}\varphi \left( \left\Vert u_{n}-Tu_{n}\right\Vert \right) 
\text{.}
\end{eqnarray*}%
Since $\lim_{n\rightarrow \infty }\left\Vert u_{n}-p\right\Vert =0$ and $%
Tp=p\in F_{T}$, it follows from (1.4) that%
\begin{eqnarray}
0 &\leq &\left\Vert u_{n}-Tu_{n}\right\Vert  \notag \\
&\leq &\left\Vert u_{n}-p\right\Vert +\left\Vert Tp-Tu_{n}\right\Vert  \notag
\\
&\leq &\left\Vert u_{n}-p\right\Vert +\delta \left\Vert p-u_{n}\right\Vert
+\varphi \left( \left\Vert p-Tp\right\Vert \right)  \notag \\
&=&\left( 1+\delta \right) \left\Vert u_{n}-p\right\Vert \rightarrow 0\text{
as }n\rightarrow \infty \text{,}  \label{eqn28}
\end{eqnarray}%
which implies $\lim_{n\rightarrow \infty }\left\Vert u_{n}-Tu_{n}\right\Vert
=0$; namely $\rho _{n}=o\left( \mu _{n}\right) $. Hence an application of
Lemma 1 to (2.10) yields $\lim_{n\rightarrow \infty }\left\Vert
u_{n}-x_{n}\right\Vert =0$. Since $u_{n}\rightarrow p$ as $n\rightarrow
\infty $ by assumption, we derive%
\begin{equation}
\left\Vert x_{n}-p\right\Vert \leq \left\Vert x_{n}-u_{n}\right\Vert
+\left\Vert u_{n}-p\right\Vert  \label{eqn29}
\end{equation}%
and this implies that $\lim_{n\rightarrow \infty }x_{n}=p$.

$\left( 2\right) \Rightarrow \left( 1\right) :$ Assume that $%
x_{n}\rightarrow p$ as $n\rightarrow \infty $. Using the Mann iteration \cite%
{Mann}, (1.6), and (1.4), we have the following estimates:%
\begin{eqnarray}
\left\Vert x_{n+1}-u_{n+1}\right\Vert &=&\left\Vert \left( 1-\alpha
_{n}\right) \left( y_{n}^{1}-u_{n}\right) +\alpha _{n}\left(
Ty_{n}^{1}-Tu_{n}\right) \right\Vert  \notag \\
&\leq &\left( 1-\alpha _{n}\right) \left\Vert y_{n}^{1}-u_{n}\right\Vert
+\alpha _{n}\left\Vert Ty_{n}^{1}-Tu_{n}\right\Vert  \notag \\
&\leq &\left( 1-\alpha _{n}\right) \left\Vert y_{n}^{1}-u_{n}\right\Vert
+\alpha _{n}\left\{ \delta \left\Vert y_{n}^{1}-u_{n}\right\Vert +\varphi
\left( \left\Vert y_{n}^{1}-Ty_{n}^{1}\right\Vert \right) \right\}  \notag \\
&=&\left[ 1-\alpha _{n}\left( 1-\delta \right) \right] \left\Vert
y_{n}^{1}-u_{n}\right\Vert +\alpha _{n}\varphi \left( \left\Vert
y_{n}^{1}-Ty_{n}^{1}\right\Vert \right) \text{,}  \label{eqn30}
\end{eqnarray}%
\begin{eqnarray}
\left\Vert y_{n}^{1}-u_{n}\right\Vert &=&\left\Vert \left( 1-\beta
_{n}^{1}\right) y_{n}^{2}+\beta _{n}^{1}Ty_{n}^{2}-u_{n}\right\Vert  \notag
\\
&=&\left\Vert \left( 1-\beta _{n}^{1}\right) y_{n}^{2}+\beta
_{n}^{1}Ty_{n}^{2}-u_{n}\left( 1-\beta _{n}^{1}+\beta _{n}^{1}\right)
\right\Vert  \notag \\
&=&\left\Vert \left( 1-\beta _{n}^{1}\right) \left( y_{n}^{2}-u_{n}\right)
+\beta _{n}^{1}\left( Ty_{n}^{2}-u_{n}\right) \right\Vert  \notag \\
&\leq &\left( 1-\beta _{n}^{1}\right) \left\Vert y_{n}^{2}-u_{n}\right\Vert
+\beta _{n}^{1}\left\Vert Ty_{n}^{2}-u_{n}\right\Vert  \notag \\
&\leq &\left( 1-\beta _{n}^{1}\right) \left\Vert y_{n}^{2}-u_{n}\right\Vert
+\beta _{n}^{1}\left\Vert Ty_{n}^{2}-y_{n}^{2}+y_{n}^{2}-u_{n}\right\Vert 
\notag \\
&\leq &\left( 1-\beta _{n}^{1}\right) \left\Vert y_{n}^{2}-u_{n}\right\Vert
+\beta _{n}^{1}\left\Vert y_{n}^{2}-u_{n}\right\Vert +\beta
_{n}^{1}\left\Vert Ty_{n}^{2}-y_{n}^{2}\right\Vert  \notag \\
&=&\left\Vert y_{n}^{2}-u_{n}\right\Vert +\beta _{n}^{1}\left\Vert
Ty_{n}^{2}-y_{n}^{2}\right\Vert ,  \label{eqn31}
\end{eqnarray}%
\begin{eqnarray}
\left\Vert y_{n}^{2}-u_{n}\right\Vert &=&\left\Vert \left( 1-\beta
_{n}^{2}\right) y_{n}^{3}+\beta _{n}^{2}Ty_{n}^{3}-u_{n}\right\Vert  \notag
\\
&=&\left\Vert \left( 1-\beta _{n}^{2}\right) \left( y_{n}^{3}-u_{n}\right)
+\beta _{n}^{2}\left( Ty_{n}^{3}-u_{n}\right) \right\Vert  \notag \\
&\leq &\left( 1-\beta _{n}^{2}\right) \left\Vert y_{n}^{3}-u_{n}\right\Vert
+\beta _{n}^{2}\left\Vert Ty_{n}^{3}-u_{n}\right\Vert  \notag \\
&\leq &\left( 1-\beta _{n}^{2}\right) \left\Vert y_{n}^{3}-u_{n}\right\Vert
+\beta _{n}^{2}\left\Vert y_{n}^{3}-u_{n}\right\Vert +\beta
_{n}^{2}\left\Vert Ty_{n}^{3}-y_{n}^{3}\right\Vert  \notag \\
&=&\left\Vert y_{n}^{3}-u_{n}\right\Vert +\beta _{n}^{2}\left\Vert
Ty_{n}^{3}-y_{n}^{3}\right\Vert \text{.}  \label{eqn32}
\end{eqnarray}%
By combining (2.12), (2.13), and (2.14) we obtain%
\begin{eqnarray}
\left\Vert x_{n+1}-u_{n+1}\right\Vert &\leq &\left[ 1-\alpha _{n}\left(
1-\delta \right) \right] \left\Vert y_{n}^{3}-u_{n}\right\Vert +\left[
1-\alpha _{n}\left( 1-\delta \right) \right] \beta _{n}^{2}\left\Vert
Ty_{n}^{3}-y_{n}^{3}\right\Vert  \notag \\
&&+\left[ 1-\alpha _{n}\left( 1-\delta \right) \right] \beta
_{n}^{1}\left\Vert Ty_{n}^{2}-y_{n}^{2}\right\Vert +\alpha _{n}\varphi
\left( \left\Vert y_{n}^{1}-Ty_{n}^{1}\right\Vert \right)  \label{eqn33}
\end{eqnarray}%
In a similar way, we have 
\begin{eqnarray}
\left\Vert x_{n+1}-u_{n+1}\right\Vert &\leq &\left[ 1-\alpha _{n}\left(
1-\delta \right) \right] \left\Vert y_{n}^{k-1}-u_{n}\right\Vert  \notag \\
&&+\left[ 1-\alpha _{n}\left( 1-\delta \right) \right] \beta
_{n}^{k-2}\left\Vert Ty_{n}^{k-1}-y_{n}^{k-1}\right\Vert  \notag \\
&&+\cdots +\left[ 1-\alpha _{n}\left( 1-\delta \right) \right] \beta
_{n}^{1}\left\Vert Ty_{n}^{2}-y_{n}^{2}\right\Vert +\alpha _{n}\varphi
\left( \left\Vert y_{n}^{1}-Ty_{n}^{1}\right\Vert \right)  \label{eqn34}
\end{eqnarray}%
Using now (1.6) we have%
\begin{eqnarray}
\left\Vert y_{n}^{k-1}-u_{n}\right\Vert &=&\left\Vert \left( 1-\beta
_{n}^{k-1}\right) x_{n}+\beta _{n}^{k-1}Tx_{n}-u_{n}\right\Vert  \notag \\
&\leq &\left( 1-\beta _{n}^{k-1}\right) \left\Vert x_{n}-u_{n}\right\Vert
+\beta _{n}^{k-1}\left\Vert Tx_{n}-u_{n}\right\Vert  \notag \\
&\leq &\left( 1-\beta _{n}^{k-1}\right) \left\Vert x_{n}-u_{n}\right\Vert
+\beta _{n}^{k-1}\left\Vert x_{n}-u_{n}\right\Vert +\beta
_{n}^{k-1}\left\Vert Tx_{n}-x_{n}\right\Vert  \notag \\
&\leq &\left\Vert x_{n}-u_{n}\right\Vert +\beta _{n}^{k-1}\left\Vert
Tx_{n}-x_{n}\right\Vert \text{.}  \label{eqn35}
\end{eqnarray}%
Substituting (2.17) in (2.16) and utilizing the assumption $\alpha _{n}\geq
A>0$,$\forall n\in 
\mathbb{N}
$ we get%
\begin{eqnarray}
\left\Vert x_{n+1}-u_{n+1}\right\Vert &\leq &\left[ 1-A\left( 1-\delta
\right) \right] \left\Vert x_{n}-u_{n}\right\Vert  \notag \\
&&+\left[ 1-A\left( 1-\delta \right) \right] \left\{ \beta
_{n}^{k-1}\left\Vert Tx_{n}-x_{n}\right\Vert +\beta _{n}^{k-2}\left\Vert
Ty_{n}^{k-1}-y_{n}^{k-1}\right\Vert \right.  \notag \\
&&\left. +\cdots +\beta _{n}^{1}\left\Vert Ty_{n}^{2}-y_{n}^{2}\right\Vert
\right\} +\alpha _{n}\varphi \left( \left\Vert
y_{n}^{1}-Ty_{n}^{1}\right\Vert \right) \text{.}  \label{eqn36}
\end{eqnarray}%
Now define%
\begin{eqnarray*}
a_{n} &:&=\left\Vert u_{n}-x_{n}\right\Vert \text{,} \\
\mu _{n} &:&=A\left( 1-\delta \right) \in \left( 0,1\right) \text{,} \\
\rho _{n} &:&=\left[ 1-A\left( 1-\delta \right) \right] \left\{ \beta
_{n}^{k-1}\left\Vert Tx_{n}-x_{n}\right\Vert +\beta _{n}^{k-2}\left\Vert
Ty_{n}^{k-1}-y_{n}^{k-1}\right\Vert \right. \\
&&\left. +\cdots +\beta _{n}^{1}\left\Vert Ty_{n}^{2}-y_{n}^{2}\right\Vert
\right\} +\alpha _{n}\varphi \left( \left\Vert
y_{n}^{1}-Ty_{n}^{1}\right\Vert \right) \text{.}
\end{eqnarray*}%
Since $\lim_{n\rightarrow \infty }\left\Vert x_{n}-p\right\Vert =0$ and $%
Tp=p\in F_{T}$, it follows from (1.4) that%
\begin{eqnarray}
0 &\leq &\left\Vert x_{n}-Tx_{n}\right\Vert  \notag \\
&\leq &\left\Vert x_{n}-p\right\Vert +\left\Vert Tp-Tx_{n}\right\Vert  \notag
\\
&\leq &\left\Vert x_{n}-p\right\Vert +\delta \left\Vert p-x_{n}\right\Vert
+\varphi \left( \left\Vert p-Tp\right\Vert \right)  \notag \\
&=&\left( 1+\delta \right) \left\Vert x_{n}-p\right\Vert \rightarrow 0\text{
as }n\rightarrow \infty \text{.}  \label{eqn37}
\end{eqnarray}

Utilizing (1.4), (1.6), and the condition $\left\{ \beta _{n}^{i}\right\}
_{n=0}^{\infty }\subset \left[ 0,1\right) $, $i=\overline{1,k-1}$, we have%
\begin{eqnarray}
0 &\leq &\left\Vert y_{n}^{1}-Ty_{n}^{1}\right\Vert =\left\Vert
y_{n}^{1}-p+p-Ty_{n}^{1}\right\Vert  \notag \\
&\leq &\left\Vert y_{n}^{1}-p\right\Vert +\left\Vert Tp-Ty_{n}^{1}\right\Vert
\notag \\
&\leq &\left\Vert y_{n}^{1}-p\right\Vert +\delta \left\Vert
p-y_{n}^{1}\right\Vert +\varphi \left( \left\Vert p-Tp\right\Vert \right) 
\notag \\
&=&\left( 1+\delta \right) \left\Vert y_{n}^{1}-p\right\Vert  \notag \\
&=&\left( 1+\delta \right) \left\Vert \left( 1-\beta _{n}^{1}\right)
y_{n}^{2}+\beta _{n}^{1}Ty_{n}^{2}-p\left( 1-\beta _{n}^{1}+\beta
_{n}^{1}\right) \right\Vert  \notag \\
&\leq &\left( 1+\delta \right) \left\{ \left( 1-\beta _{n}^{1}\right)
\left\Vert y_{n}^{2}-p\right\Vert +\beta _{n}^{1}\left\Vert
Ty_{n}^{2}-Tp\right\Vert \right\}  \notag \\
&\leq &\left( 1+\delta \right) \left\{ \left( 1-\beta _{n}^{1}\right)
\left\Vert y_{n}^{2}-p\right\Vert +\beta _{n}^{1}\delta \left\Vert
y_{n}^{2}-p\right\Vert \right\}  \notag \\
&=&\left( 1+\delta \right) \left[ 1-\beta _{n}^{1}\left( 1-\delta \right) %
\right] \left\Vert y_{n}^{2}-p\right\Vert  \notag \\
&=&\left( 1+\delta \right) \left[ 1-\beta _{n}^{1}\left( 1-\delta \right) %
\right] \left\Vert \left( 1-\beta _{n}^{2}\right) y_{n}^{3}+\beta
_{n}^{2}Ty_{n}^{3}-p\left( 1-\beta _{n}^{2}+\beta _{n}^{2}\right) \right\Vert
\notag \\
&\leq &\left( 1+\delta \right) \left[ 1-\beta _{n}^{1}\left( 1-\delta
\right) \right] \left\{ \left( 1-\beta _{n}^{2}\right) \left\Vert
y_{n}^{3}-p\right\Vert +\beta _{n}^{2}\left\Vert Ty_{n}^{3}-Tp\right\Vert
\right\}  \notag \\
&\leq &\left( 1+\delta \right) \left[ 1-\beta _{n}^{1}\left( 1-\delta
\right) \right] \left[ 1-\beta _{n}^{2}\left( 1-\delta \right) \right]
\left\Vert y_{n}^{3}-p\right\Vert  \notag \\
&&\cdots  \notag \\
&\leq &\left( 1+\delta \right) \left[ 1-\beta _{n}^{1}\left( 1-\delta
\right) \right] \cdots \left[ 1-\beta _{n}^{k-2}\left( 1-\delta \right) %
\right] \left\Vert y_{n}^{k-1}-p\right\Vert  \notag \\
&\leq &\left( 1+\delta \right) \left[ 1-\beta _{n}^{1}\left( 1-\delta
\right) \right] \cdots \left[ 1-\beta _{n}^{k-1}\left( 1-\delta \right) %
\right] \left\Vert x_{n}-p\right\Vert  \notag \\
&\leq &\left( 1+\delta \right) \left\Vert x_{n}-p\right\Vert \rightarrow 0%
\text{ as }n\rightarrow \infty \text{.}  \label{eqn38}
\end{eqnarray}%
It is easy to see from (2.20) that this result is also valid for $\left\Vert
Ty_{n}^{2}-y_{n}^{2}\right\Vert ,\ldots ,\left\Vert
Ty_{n}^{k-1}-y_{n}^{k-1}\right\Vert $.

Since $\varphi $ is continuous, we have 
\begin{eqnarray}
\lim_{n\rightarrow \infty }\left\Vert x_{n}-Tx_{n}\right\Vert
&=&\lim_{n\rightarrow \infty }\varphi \left( \left\Vert
y_{n}^{1}-Ty_{n}^{1}\right\Vert \right)  \notag \\
&=&\lim_{n\rightarrow \infty }\left\Vert y_{n}^{2}-Ty_{n}^{2}\right\Vert
=\cdots =\lim_{n\rightarrow \infty }\left\Vert
y_{n}^{k-1}-Ty_{n}^{k-1}\right\Vert =0\text{,}  \label{eqn39}
\end{eqnarray}%
that is $\rho _{n}=o\left( \mu _{n}\right) $. Hence an application of Lemma
1 to (2.18) lead to $\lim_{n\rightarrow \infty }\left\Vert
x_{n}-u_{n}\right\Vert =0$. Since $x_{n}\rightarrow p$ as $n\rightarrow
\infty $ by assumption, we derive%
\begin{equation}
\left\Vert u_{n}-p\right\Vert \leq \left\Vert u_{n}-x_{n}\right\Vert
+\left\Vert x_{n}-p\right\Vert  \label{eqn40}
\end{equation}%
and this implies that $\lim_{n\rightarrow \infty }u_{n}=p$.
\end{proof}

\begin{theorem}
Let $T:E\rightarrow E$ be a mapping satisfying condition $\left( 1.4\right) $
with $F_{T}\neq \emptyset $. If $x_{0}=u_{0}\in E$ and $\alpha _{n}\geq A>0$,%
$\forall n\in 
\mathbb{N}
$, then the following are equivalent:

\begin{enumerate}
\item The Mann iteration \cite{Mann} converges to $p\in F_{T}$,

\item The S-iteration $\left( 1.7\right) $ converges to $p\in F_{T}$.
\end{enumerate}
\end{theorem}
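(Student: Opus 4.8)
The plan is to follow the Rhoades--\c{S}oltuz equivalence template already used for Theorem 1, now with the two-step $S$-iteration $(1.7)$ playing the role of the new multistep iteration $(1.6)$. Throughout, $\{u_n\}$ denotes the Mann iteration and $\{x_n\}$ the $S$-iteration; they share the common sequence $\{\alpha_n\}$ and the common starting point $u_0=x_0$. In each of the two implications the endgame is identical: reduce the estimate to a recursion $a_{n+1}\le(1-\mu_n)a_n+\rho_n$ for $a_n:=\|u_n-x_n\|$, with $\mu_n:=A(1-\delta)\in(0,1)$ (so $\sum\mu_n=\infty$) and $\rho_n\to0$, then apply Lemma 1 and finish with a triangle inequality against $p$.

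For $(1)\Rightarrow(2)$ I would assume $u_n\to p$ and expand $u_{n+1}-x_{n+1}=(1-\alpha_n)(u_n-Tx_n)+\alpha_n(Tu_n-Ty_n)$. Inserting $Tu_n$ in $\|u_n-Tx_n\|$ and applying $(1.4)$, and likewise estimating $\|Tu_n-Ty_n\|\le\delta\|u_n-y_n\|+\varphi(\|u_n-Tu_n\|)$ after expanding the inner iterate $y_n=(1-\beta_n)x_n+\beta_n Tx_n$ by $(1.4)$ to get $\|u_n-y_n\|\le[1-\beta_n(1-\delta)]\|u_n-x_n\|+\beta_n\{\|u_n-Tu_n\|+\varphi(\|u_n-Tu_n\|)\}$, one finds that the coefficient of $\|u_n-x_n\|$ collapses to $\delta[1-\alpha_n\beta_n(1-\delta)]\le\delta\le 1-A(1-\delta)$, while every remaining term is a multiple of $\|u_n-Tu_n\|$ or $\varphi(\|u_n-Tu_n\|)$. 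Exactly as in $(2.11)$, $\|u_n-Tu_n\|\le(1+\delta)\|u_n-p\|\to0$, so by continuity of $\varphi$ with $\varphi(0)=0$ the residual $\rho_n\to0$; Lemma 1 gives $\|u_n-x_n\|\to0$, and then $\|x_n-p\|\le\|x_n-u_n\|+\|u_n-p\|\to0$.

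For $(2)\Rightarrow(1)$ I would assume $x_n\to p$ and first record, as in $(2.19)$--$(2.20)$, that $\|x_n-Tx_n\|\le(1+\delta)\|x_n-p\|\to0$ and, since $\|y_n-p\|\le[1-\beta_n(1-\delta)]\|x_n-p\|\le\|x_n-p\|$, also $\|y_n-Ty_n\|\le(1+\delta)\|y_n-p\|\to0$. Then I would expand $x_{n+1}-u_{n+1}=(1-\alpha_n)(Tx_n-u_n)+\alpha_n(Ty_n-Tu_n)$ and bound $\|Tx_n-u_n\|\le\|Tx_n-x_n\|+\|x_n-u_n\|$ together with $\|y_n-u_n\|\le\|x_n-u_n\|+\beta_n\|Tx_n-x_n\|$ (from the convex combination defining $y_n$), finishing with $(1.4)$ applied to $\|Ty_n-Tu_n\|\le\delta\|y_n-u_n\|+\varphi(\|y_n-Ty_n\|)$. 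Collecting terms, the coefficient of $\|x_n-u_n\|$ is $(1-\alpha_n)+\alpha_n\delta=1-\alpha_n(1-\delta)\le 1-A(1-\delta)$, and the residual is a fixed nonnegative combination of $\|Tx_n-x_n\|$ and $\varphi(\|y_n-Ty_n\|)$, hence $\rho_n\to0$; Lemma 1 gives $\|x_n-u_n\|\to0$ and $\|u_n-p\|\le\|u_n-x_n\|+\|x_n-p\|\to0$.

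I expect the only genuinely delicate point to be in $(2)\Rightarrow(1)$: one may not invoke $\|u_n-Tu_n\|\to0$ there, as that is essentially the conclusion being proved. The remedy is the choice of decomposition above --- expanding $x_{n+1}-u_{n+1}$ so that $T$ is never applied to $u_n$ in the part that is not a genuine contraction of $\|x_n-u_n\|$, which forces the residual to involve only $\|Tx_n-x_n\|$ and $\varphi(\|y_n-Ty_n\|)$, both controlled by $x_n\to p$ and the continuity of $\varphi$. As in Theorem 1, the hypothesis $\alpha_n\ge A>0$ is exactly what is needed to pass from $1-\alpha_n(1-\delta)$ to the constant bound $1-A(1-\delta)$ and thereby keep $\sum\mu_n=\infty$, so that Lemma 1 applies.
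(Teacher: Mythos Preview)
Your proposal is correct and follows essentially the same approach as the paper: the same decompositions of $u_{n+1}-x_{n+1}$ and $x_{n+1}-u_{n+1}$, the same insertions of $Tu_n$ (resp.\ $x_n$) to produce residuals controlled by $\|u_n-Tu_n\|$ (resp.\ $\|Tx_n-x_n\|$ and $\varphi(\|y_n-Ty_n\|)$), and the same appeal to Lemma~1 with $\mu_n=A(1-\delta)$. The only cosmetic difference is that in $(1)\Rightarrow(2)$ you bound the coefficient of $\|u_n-x_n\|$ directly by $\delta[1-\alpha_n\beta_n(1-\delta)]\le\delta\le 1-A(1-\delta)$, whereas the paper first uses $(1-\alpha_n)\delta<1-\alpha_n$ and $1-\beta_n(1-\delta)<1$ to reach $1-\alpha_n(1-\delta)$; both routes yield the same recursion.
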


\begin{proof}
To prove the implication $\left( 1\right) \Rightarrow \left( 2\right) $,
suppose that the Mann iteration \cite{Mann} converges to $p$. Using (1.4),
the Mann iteration \cite{Mann}, and (1.7) we have the following estimates:%
\begin{eqnarray}
\left\Vert u_{n+1}-x_{n+1}\right\Vert &=&\left\Vert \left( 1-\alpha
_{n}\right) \left( u_{n}-Tx_{n}\right) +\alpha _{n}\left(
Tu_{n}-Ty_{n}\right) \right\Vert  \notag \\
&\leq &\left( 1-\alpha _{n}\right) \left\Vert u_{n}-Tx_{n}\right\Vert
+\alpha _{n}\left\Vert Tu_{n}-Ty_{n}\right\Vert  \notag \\
&\leq &\left( 1-\alpha _{n}\right) \left\Vert u_{n}-Tx_{n}\right\Vert
+\alpha _{n}\delta \left\Vert u_{n}-y_{n}\right\Vert +\alpha _{n}\varphi
\left( \left\Vert u_{n}-Tu_{n}\right\Vert \right) \text{,}  \label{eqn41}
\end{eqnarray}%
\begin{eqnarray}
\left\Vert u_{n}-y_{n}\right\Vert &=&\left\Vert u_{n}-\left( 1-\beta
_{n}\right) x_{n}-\beta _{n}Tx_{n}\right\Vert  \notag \\
&=&\left\Vert u_{n}-\beta _{n}u_{n}+\beta _{n}u_{n}-\left( 1-\beta
_{n}\right) x_{n}-\beta _{n}Tx_{n}\right\Vert  \notag \\
&\leq &\left( 1-\beta _{n}\right) \left\Vert u_{n}-x_{n}\right\Vert +\beta
_{n}\left\Vert u_{n}-Tx_{n}\right\Vert \text{,}  \label{eqn42}
\end{eqnarray}%
\begin{eqnarray}
\left\Vert u_{n}-Tx_{n}\right\Vert &=&\left\Vert
u_{n}-Tu_{n}+Tu_{n}-Tx_{n}\right\Vert  \notag \\
&\leq &\left\Vert u_{n}-Tu_{n}\right\Vert +\left\Vert
Tu_{n}-Tx_{n}\right\Vert  \notag \\
&\leq &\left\Vert u_{n}-Tu_{n}\right\Vert +\delta \left\Vert
u_{n}-x_{n}\right\Vert +\varphi \left( \left\Vert u_{n}-Tu_{n}\right\Vert
\right) \text{.}  \label{eqn43}
\end{eqnarray}%
By combining (2.23),(2.24), and (2.25) we obtain%
\begin{eqnarray}
\left\Vert u_{n+1}-x_{n+1}\right\Vert &\leq &\left\{ \left( 1-\alpha
_{n}\right) \delta +\alpha _{n}\delta \left[ 1-\beta _{n}\left( 1-\delta
\right) \right] \right\} \left\Vert u_{n}-x_{n}\right\Vert  \notag \\
&&+\left[ 1-\alpha _{n}+\alpha _{n}\beta _{n}\delta \right] \left\Vert
u_{n}-Tu_{n}\right\Vert  \notag \\
&&+\left[ 1+\alpha _{n}\beta _{n}\delta \right] \varphi \left( \left\Vert
u_{n}-Tu_{n}\right\Vert \right) \text{.}  \label{eqn44}
\end{eqnarray}%
Since $\delta ,\alpha _{n},\beta _{n}\in \left[ 0,1\right) $ for all $n\in 
\mathbb{N}
$,%
\begin{equation}
\left( 1-\alpha _{n}\right) \delta <1-\alpha _{n}\text{, }1-\beta _{n}\left(
1-\delta \right) <1\text{.}  \label{eqn45}
\end{equation}%
Using (2.27) and the assumption $\alpha _{n}\geq A>0$,$\forall n\in 
\mathbb{N}
$ in (2.26) we derive%
\begin{eqnarray}
\left\Vert u_{n+1}-x_{n+1}\right\Vert &\leq &\left[ 1-A\left( 1-\delta
\right) \right] \left\Vert u_{n}-x_{n}\right\Vert  \notag \\
&&+\left[ 1-A\left( 1-\delta \right) \right] \left\Vert
u_{n}-Tu_{n}\right\Vert  \notag \\
&&+\left[ 1+\alpha _{n}\beta _{n}\delta \right] \varphi \left( \left\Vert
u_{n}-Tu_{n}\right\Vert \right) \text{.}  \label{eqn46}
\end{eqnarray}%
Define%
\begin{eqnarray*}
a_{n} &:&=\left\Vert u_{n}-x_{n}\right\Vert \text{,} \\
\mu _{n} &:&=A\left( 1-\delta \right) \in \left( 0,1\right) \text{,} \\
\rho _{n} &:&=\left[ 1-A\left( 1-\delta \right) \right] \left\Vert
u_{n}-Tu_{n}\right\Vert +\left[ 1+\alpha _{n}\beta _{n}\delta \right]
\varphi \left( \left\Vert u_{n}-Tu_{n}\right\Vert \right) \text{.}
\end{eqnarray*}%
Since $\lim_{n\rightarrow \infty }\left\Vert u_{n}-p\right\Vert =0$, $%
\lim_{n\rightarrow \infty }\left\Vert u_{n}-Tu_{n}\right\Vert =0$ as in the
proof of Theorem1. It therefore follows, using the same argument as that
employed in the proof of Theorem 1 that $\lim_{n\rightarrow \infty }x_{n}=p$.

We will prove now that, if the S-iteration converges, then the Mann
iteration does too.

Using (1.4), the Mann iteration \cite{Mann}, and (1.7) we have%
\begin{eqnarray}
\left\Vert x_{n+1}-u_{n+1}\right\Vert &=&\left\Vert \left( 1-\alpha
_{n}\right) \left( Tx_{n}-u_{n}\right) +\alpha _{n}\left(
Ty_{n}-Tu_{n}\right) \right\Vert  \notag \\
&\leq &\left( 1-\alpha _{n}\right) \left\Vert Tx_{n}-u_{n}\right\Vert
+\alpha _{n}\left\Vert Ty_{n}-Tu_{n}\right\Vert  \notag \\
&\leq &\left( 1-\alpha _{n}\right) \left\Vert Tx_{n}-u_{n}\right\Vert
+\alpha _{n}\delta \left\Vert y_{n}-u_{n}\right\Vert +\alpha _{n}\varphi
\left( \left\Vert y_{n}-Ty_{n}\right\Vert \right) \text{.}  \label{eqn47}
\end{eqnarray}%
We now have the following estimates%
\begin{eqnarray}
\left\Vert y_{n}-u_{n}\right\Vert &=&\left\Vert \left( 1-\beta _{n}\right)
x_{n}+\beta _{n}Tx_{n}-u_{n}\right\Vert  \notag \\
&=&\left\Vert \left( 1-\beta _{n}\right) x_{n}+\beta _{n}Tx_{n}-u_{n}-\beta
_{n}u_{n}+\beta _{n}u_{n}\right\Vert  \notag \\
&\leq &\left( 1-\beta _{n}\right) \left\Vert x_{n}-u_{n}\right\Vert +\beta
_{n}\left\Vert Tx_{n}-u_{n}\right\Vert \text{,}  \label{eqn48}
\end{eqnarray}%
\begin{eqnarray}
\left\Vert Tx_{n}-u_{n}\right\Vert &=&\left\Vert
Tx_{n}-x_{n}+x_{n}-u_{n}\right\Vert  \notag \\
&\leq &\left\Vert Tx_{n}-x_{n}\right\Vert +\left\Vert x_{n}-u_{n}\right\Vert 
\text{.}  \label{eqn49}
\end{eqnarray}%
Relations (2.29),(2.30), and (2.31) lead to%
\begin{eqnarray}
\left\Vert x_{n+1}-u_{n+1}\right\Vert &\leq &\left[ 1-\alpha _{n}\left(
1-\delta \right) \right] \left\Vert x_{n}-u_{n}\right\Vert  \notag \\
&&+\left[ 1-\alpha _{n}+\alpha _{n}\beta _{n}\delta \right] \left\Vert
Tx_{n}-x_{n}\right\Vert +\alpha _{n}\varphi \left( \left\Vert
y_{n}-Ty_{n}\right\Vert \right) \text{.}  \label{eqn50}
\end{eqnarray}%
Since $\beta _{n}\in \left[ 0,1\right) $ for all $n\in 
\mathbb{N}
$,%
\begin{equation}
\alpha _{n}\beta _{n}\delta <\alpha _{n}\delta \text{.}  \label{eqn51}
\end{equation}%
Utilizing inequality (2.33) and the assumption $\alpha _{n}\geq A>0$,$%
\forall n\in 
\mathbb{N}
$ in (2.32) we get%
\begin{eqnarray}
\left\Vert u_{n+1}-x_{n+1}\right\Vert &\leq &\left[ 1-A\left( 1-\delta
\right) \right] \left\Vert x_{n}-u_{n}\right\Vert  \notag \\
&&+\left[ 1-A\left( 1-\delta \right) \right] \left\Vert
Tx_{n}-x_{n}\right\Vert +\alpha _{n}\varphi \left( \left\Vert
y_{n}-Ty_{n}\right\Vert \right) \text{.}  \label{eqn52}
\end{eqnarray}%
Now define%
\begin{eqnarray*}
a_{n} &:&=\left\Vert x_{n}-u_{n}\right\Vert \text{,} \\
\mu _{n} &:&=A\left( 1-\delta \right) \in \left( 0,1\right) \text{,} \\
\rho _{n} &:&=\left[ 1-A\left( 1-\delta \right) \right] \left\Vert
Tx_{n}-x_{n}\right\Vert +\alpha _{n}\varphi \left( \left\Vert
y_{n}-Ty_{n}\right\Vert \right) \text{.}
\end{eqnarray*}%
Since $\lim_{n\rightarrow \infty }\left\Vert x_{n}-p\right\Vert =0$, $%
\lim_{n\rightarrow \infty }\left\Vert Tx_{n}-x_{n}\right\Vert =0$ as in the
proof of Theorem1.

Now we have%
\begin{eqnarray}
0 &\leq &\left\Vert y_{n}-Ty_{n}\right\Vert  \notag \\
&\leq &\left\Vert y_{n}-p\right\Vert +\left\Vert Tp-Ty_{n}\right\Vert  \notag
\\
&\leq &\left\Vert y_{n}-p\right\Vert +\delta \left\Vert p-y_{n}\right\Vert
+\varphi \left( \left\Vert p-Tp\right\Vert \right)  \notag \\
&=&\left( 1+\delta \right) \left\Vert y_{n}-p\right\Vert  \notag \\
&\leq &\left( 1+\delta \right) \left( 1-\beta _{n}\right) \left\Vert
x_{n}-p\right\Vert +\left( 1+\delta \right) \beta _{n}\left\Vert
Tx_{n}-Tp\right\Vert  \notag \\
&\leq &\left( 1+\delta \right) \left[ 1-\beta _{n}+\beta _{n}\right]
\left\Vert x_{n}-p\right\Vert +\left( 1+\delta \right) \beta _{n}\varphi
\left( \left\Vert p-Tp\right\Vert \right)  \notag \\
&=&\left( 1+\delta \right) \left\Vert x_{n}-p\right\Vert \rightarrow 0\text{
as }n\rightarrow \infty \text{,}  \label{eqn53}
\end{eqnarray}%
that is, $\lim_{n\rightarrow \infty }\left\Vert y_{n}-Ty_{n}\right\Vert =0$,
threfore using the same argument as in the proof of Theorem 1, it can be
shown that $\lim_{n\rightarrow \infty }u_{n}=p$.
\end{proof}

As shown by \c{S}oltuz and Grosan (\cite{Data Is 2}, Theorem 3.1), in a real
Banach space $X$, the Ishikawa iteration \cite{Ishikawa} converges to the
fixed point of $T$, where $T:E\rightarrow E$ is a mapping satisfying
condition (1.4).

In 2007, \c{S}oltuz (\cite{Mult Soltz}, Corollary 2) proved that the
Krasnoselskij \cite{Krasnoselskij}, Mann \cite{Mann}, Ishikawa \cite%
{Ishikawa}, Noor \cite{Noor} and multistep (1.5) iterations are equivalent
for quasi-contractive mappings in a normed space setting.

In 2011, Chugh and Kumar (\cite{CR}, Corollary 3.2) proved that the Picard 
\cite{Picard}, Mann \cite{Mann}, Ishikawa \cite{Ishikawa}, new two step \cite%
{Thianwan}, Noor \cite{Noor} and SP \cite{SP} iterations are equivalent for
quasi-contractive mappings in a Banach space setting.

From the argument used in the proofs of (\cite{Data Is 2}, Theorem 3.1), (%
\cite{Mult Soltz}, Corollary 2) and (\cite{CR}, Corollary 3.2) we easily
obtain the following corollary:

\begin{corollary}
$T:E\rightarrow E$ be a mapping satisfying condition $(1.4)$ with $F_{T}\neq
\emptyset $. If the initial point is the same for all iterations, $\alpha
_{n}\geq A>0$, $\forall n\in 
\mathbb{N}
$, then the \textit{following are equivalent:}
\end{corollary}

\begin{enumerate}
\item \textit{The Picard iteration }\cite{Picard}\textit{\ converges to }$%
p\in F_{T}$\textit{;}

\item \textit{The Krasnoselskij iteration \cite{Krasnoselskij}\ converges to 
}$p\in F_{T}$\textit{;}

\item \textit{The Mann iteration \cite{Mann}\ converges to }$p\in F_{T}$%
\textit{;}

\item \textit{The Ishikawa iteration }\cite{Ishikawa}\textit{\ converges to }%
$p\in F_{T}$\textit{;}

\item \textit{The new two step iteration }\cite{Thianwan}\textit{\ converges
to }$p\in F_{T}$\textit{;}

\item \textit{The Noor iteration }\cite{Noor}\textit{\ converges to }$p\in
F_{T}$\textit{;}

\item \textit{The SP iteration }\cite{SP}\textit{\ converges to }$p\in F_{T}$%
\textit{;}

\item \textit{The Multistep iteration }(1.5)\textit{\ converges to }$p\in
F_{T}$\textit{;}
\end{enumerate}

Together with Theorem 1 and Theorem 2,Corollary 1 leads to the following
corollary:

\begin{corollary}
$T:E\rightarrow E$ be a mapping satisfying condition $(1.4)$ with $F_{T}\neq
\emptyset $. If the initial point is the same for all iterations, $\alpha
_{n}\geq A>0$, $\forall n\in 
\mathbb{N}
$, then the \textit{following are equivalent:}
\end{corollary}

\begin{enumerate}
\item \textit{The Picard iteration }\cite{Picard}\textit{\ converges to }$%
p\in F_{T}$\textit{;}

\item \textit{The Krasnoselskij iteration \cite{Krasnoselskij}\ converges to 
}$p\in F_{T}$\textit{;}

\item \textit{The Mann iteration \cite{Mann}\ converges to }$p\in F_{T}$%
\textit{;}

\item \textit{The Ishikawa iteration }\cite{Ishikawa}\textit{\ converges to }%
$p\in F_{T}$\textit{;}

\item \textit{The new two step iteration }\cite{Thianwan}\textit{\ converges
to }$p\in F_{T}$\textit{;}

\item \textit{The Noor iteration }\cite{Noor}\textit{\ converges to }$p\in
F_{T}$\textit{;}

\item \textit{The SP iteration }\cite{SP}\textit{\ converges to }$p\in F_{T}$%
\textit{;}

\item \textit{The Multistep iteration }(1.5)\textit{\ converges to }$p\in
F_{T}$\textit{;}

\item \textit{The new multistep iteration} (1.6)\textit{\ converges to }$%
p\in F_{T}$\textit{;}

\item \textit{The S-iteration }(1.7)\textit{\ converges to }$p\in F_{T}$.
\end{enumerate}

\begin{acknowledgement}
The first two authors would like to thank Y\i ld\i z Technical University
Scientific Research Projects Coordination Department under project number
BAPK 2012-07-03-DOP02 for financial support during the preparation of this
manuscript.
\end{acknowledgement}

\end{document}